\newcommand*{\rom}[1]{\expandafter\@slowromancap\romannumeral #1@}
\newtheorem{theorem}{Theorem}
\newtheorem{lemma}{Lemma}
\title{Dvo{\v{r}}{\'a}k--Dell--Grohe--Rattan theorem via an asymptotic argument}
\author{ Alexander Kozachinskiy\footnote{alexander.kozachinskyi@cenia.cl. Supported by the ANID Fondecyt Iniciación grant 11250060 and National
Center for Artificial Intelligence CENIA FB210017, Basal ANID.} }
\date{Centro Nacional de Inteligencia Artificial, Chile
}
\begin{document}

\maketitle
\begin{abstract}
   Two graphs $G_1,G_2$ are distinguished by the Weisfeiler--Leman isomorphism test if and only if there is a tree $T$ that has a different number of homomorphisms to $G_1$ and to $G_2$. There are two known proofs of this fact -- a logical proof by Dvo{\v{r}}{\'a}k and a linear-algebraic proof by  Dell, Grohe, and Rattan.   We give another simple proof, based on ordering WL-labels and asymptotic arguments.
\end{abstract}

The Weisfeiler--Leman (WL) test~\cite{weisfeiler1968reduction} is an algorithm meant for checking if two given graphs $G_1$ and $G_2$ are isomorphic. It works as follows. First, one defines a notion of the \emph{$k$-level} WL-label of a node of a graph, inductively over $k$. For $k = 0$, all nodes in all graphs have the same $0$-level WL-label. Now, the $(k+1)$-level WL-label of a node is the multiset of $k$-level WL-labels of its neighbors. Two graphs $G_1$ and $G_2$ are distinguished by the WL-test if for some $k$, they have different multisets of $k$-level WL-labels of their nodes.

This is a sound isomorphism test (isomorphic graphs cannot be distinguished) but not complete -- for instance, a hexagon will not be distinguished from two disjoint triangles. It is known that two graphs $G_1$ and $G_2$ are distinguished by the WL test if and only if there exists a tree $T$  such that there is a different number of homomorphisms from $T$ to $G_1$ and from $T$ to $G_2$. Here by a homomorphism from a graph $H$ to a graph $G$ we mean a mapping from the set of nodes of $H$ to the set of nodes of $G$ such that adjacent nodes in $H$ are mapped to adjacent nodes in $G$. This result was first proved by Dvo{\v{r}}{\'a}k~\cite{dvovrak2010recognizing} using a logical argument, and then re-discovered by Dell, Grohe, and Rattan~\cite{dell2018lovasz} with a linear-algebraic argument.

There is a simple direction of this theorem -- if there is a tree $T$ that has a different number of homomorphisms to $G_1$ and $G_2$, then $G_1$ and $G_2$ are distinguished by the WL test. This follows from an observation that the $k$-level WL-label $\ell_k(u)$ of a node $u$ in a graph $G$ uniquely determines the number $h(T,\ell_k(u))$ of homomorphisms from a depth-$k$ tree $T$ with an indicated ``root'' to $G$ such that the root goes to $u$. Hence, for a $k$-depth tree $T$, if we arbitrarily choose one of its nodes as a root, the number of homomorphisms from $T$ to a graph $G = (V, E)$ is:
\[\sum\limits_{u\in V} h(T,\ell_k(u)).\]
If for some depth-$k$ tree $T$ and for two graphs $G_1 = (V_1, E_1), G_2 = (V_2, E_2)$, we have:
\[\sum\limits_{u_1\in V_1} h(T,\ell_k(u)) \neq \sum\limits_{u_2\in V_2} h(T,\ell_k(u_2)),\]
then the multisets of $k$-level WL-labels of the nodes of $G_1$ and $G_2$ have to be different.

We now establish the hard direction via a new argument.

\begin{theorem}
    If two graphs $G_1$ and $G_2$ are distinguished by the WL test, then there is a tree $T$ that has a different number of homomorphisms to $G_1$ and to $G_2$.
\end{theorem}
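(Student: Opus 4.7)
The plan is to proceed by induction on $k$: I will construct, for each $k$, a depth-$k$ rooted tree $T_k$ such that the values $h(T_k,\ell)$ are pairwise distinct as $\ell$ ranges over the (finitely many) level-$k$ WL-labels appearing in $G_1 \cup G_2$. Once such trees are in hand, the theorem will follow by using a suitable ``power'' of $T_k$ at the level $k$ where $G_1, G_2$ are distinguished. Write $h(T, u) := h(T, \ell_k(u))$ when $T$ has depth $k$. Everything is driven by one elementary asymptotic fact: given distinct positive reals $Y_1 > Y_2 > \cdots > Y_m > 0$ and two distinct tuples of non-negative integers $(c_i) \neq (c_i')$, one has $\sum_i c_i Y_i^n \neq \sum_i c_i' Y_i^n$ for all sufficiently large $n$, since the lowest-index discrepancy dominates.

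Base: $T_0$ is a single vertex; there is only one level-$0$ label, so separation is trivial and $h(T_0, u) = 1$. Inductive step: given $T_k$ separating $\ell_1, \ldots, \ell_m$ via $X_i := h(T_k, \ell_i)$, let $T_k^{[n]}$ denote the rooted tree obtained by identifying the roots of $n$ copies of $T_k$, so $h(T_k^{[n]}, v) = h(T_k, v)^n$, and let $S_n$ be the depth-$(k{+}1)$ rooted tree whose root has a single child, the root of $T_k^{[n]}$. Then
\[
h(S_n, u) \;=\; \sum_{v \in N(u)} h(T_k, v)^n \;=\; \sum_{i=1}^m c^u_i \, X_i^n,
\]
where $c^u_i$ counts the neighbors of $u$ with level-$k$ label $\ell_i$; the tuple $(c^u_i)_i$ is exactly the level-$(k{+}1)$ label of $u$. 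A short induction shows that $X_i = 0$ only for the ``empty'' level-$k$ label (i.e.\ isolated vertices, when $k \geq 1$), which never occurs as the label of any vertex's neighbor; hence the positive-$X$ coordinates alone suffice to distinguish the tuples $(c^u_i)$, and applying the asymptotic fact to them yields some $n$ for which $S_n$ separates all level-$(k{+}1)$ labels in $G_1 \cup G_2$. Set $T_{k+1} := S_n$.

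To conclude the theorem, suppose $G_1, G_2$ are distinguished at level $k$, with $N_i(G)$ counting vertices of $G$ with level-$k$ label $\ell_i$. If $|V(G_1)| \neq |V(G_2)|$, the single-vertex tree already distinguishes them. Otherwise the tuples $(N_i(G_1))_i$ and $(N_i(G_2))_i$ are distinct but share the same sum $|V|$, so they must differ on a coordinate with $X_i > 0$. By the formula recalled in the excerpt applied to $T_k^{[n]}$,
\[
\sum_{u \in V(G)} h(T_k^{[n]}, u) \;=\; \sum_{u \in V(G)} h(T_k, u)^n \;=\; \sum_{i : X_i > 0} N_i(G) \, X_i^n,
\]
and the asymptotic fact yields a large $n$ making these two values differ between $G_1$ and $G_2$.

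I expect the only substantive content to be the asymptotic power-sum separation, which is elementary. The mild bookkeeping around zero $X$-values (isolated-vertex labels) requires the small observation that isolated vertices cannot appear as anyone's neighbor, but nothing more. I do not anticipate a serious obstacle in turning this sketch into a full proof.
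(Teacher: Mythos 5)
Your proposal is correct and takes essentially the same route as the paper: the same gadgets (a root with one child carrying the lower-level tree, and gluing $n$ copies at a root so that rooted homomorphism counts become $n$-th powers), the same ``largest term dominates'' asymptotics, and the same treatment of isolated vertices. The only difference is organizational: where the paper fixes an explicit lexicographic order on WL-labels and proves a domination lemma along it, you maintain the invariant that a single depth-$k$ tree takes pairwise distinct (positive) values on the labels appearing in $G_1$ and $G_2$, letting those numeric values play the role of the order.
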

\begin{proof}

    For the proof, we define, for any $k$, a linear order on $k$-level WL-labels. For $k = 0$, there is just one possible label, so there is nothing to define. For $k = 1$, each label is identified with a natural number (degree of a node in a graph). The larger this number is, the larger we put the label in our order.

    More generally, assume we have already defined the order on $k$-level labels. We extend it to $(k+1)$-level labels ``lexicographically''. Namely, $(k+1)$-level labels are multisets of $k$-level labels. Now, we take two $(k+1)$-labels $\ell_1$ and $\ell_2$ that we want to ``compare''. There is already an order on their elements (these elements are $k$-level labels). We take the largest element that appears a different number of times in the multisets $\ell_1$ and $\ell_2$. The multiset that has this element more times is defined to be the larger one. This is clearly a linear order.

\begin{lemma}
\label{lem_dom}
    For any finite set $S$ of $WL$-labels that does not have labels of isolated nodes, and for any $k$, there exists  a sequence of $k$-depth rooted trees $\{T^k_n\}_{n\to\infty}$ such that for any two $k$-level labels $\ell_1, \ell_2\in S$, we have:
     \begin{equation}
     \label{eq_order}
         \ell_1 < \ell_2 \implies h(T^k_n, \ell_1) = o(h(T^k_n, \ell_2)) \text{ as } n\to\infty.\
     \end{equation}
\end{lemma}


    Let us first deduce the theorem from the lemma. Take any two graphs $G_1$ and $G_2$ that are distinguished after the $k$-th iteration of the WL-test.  Assume first, for simplicity, that $G_1$ and $G_2$ do not have isolated nodes.
    We apply the lemma to the set $L$of $k$-level WL labels that appear in $G_1$ and $G_2$. We will show that one of the trees $T^k_n$ distinguishes $G_1$ and $G_2$ by the number of homomorphisms from it (as from a non-rooted tree). Indeed, the number of homomorphisms from $T^k_n$ to a graph $G$ can be counted as the sum over vertices $v$ of $G$, of the number of homomorphisms from $T^k_n$ to $G$ that map the root to $v$. Hence, it is enough to show that for a large enough $n$, we have:
    \begin{equation}
    \label{eq_dif}
    \sum\limits_{v\in V(G_1)} h(T^k_n,\ell^k(v)) \neq \sum\limits_{v\in V(G_2)} h(T^k_n,\ell^k(v)). 
    \end{equation}
    Graphs $G_1$ and $G_2$ have different multisets of $k$-level labels. Consider the largest (in our ``lexicographic'' order) $k$-level label $\ell$ that appears a different number of times in them. Let us say, it appears more in $G_1$. If we consider the difference between the left-hand side and the right-hand side in \eqref{eq_dif}, all terms corresponding to labels that are larger than $\ell$ will cancel out. In turn, $h(T_n^k, \ell)$ will be with a positive coefficient in this difference, and $h(T_n^k,\ell)$ is itself strictly positive because $\ell$ is for a non-isolated node. Hence, for large enough $n$, the difference will be strictly positive. Indeed, terms $h(T_n^k, \ell_1)$ for $\ell_1 < \ell$ can have negative coefficients, but because these terms are dominated by  $h(T_n^k, \ell)$, this will not prevent the difference from being positive in the limit.

    What if $G_1$ and $G_2$ have isolated nodes? Then in \eqref{eq_dif}, some terms are 0s, corresponding to isolated nodes. If the multisets of non-isolated nodes are different, the same argument as before applies. If they are the same, then the graphs have to differ just in the number of nodes, in which case we can distinguish them by the number of homomorphisms from a single isolated node.

    \begin{proof}[Proof of Lemma \ref{lem_dom}]
        Let us first establish the lemma for $k = 1$. We will let $T^1_n$ be just a star with $n$ children. There are $d^n$ homomorphisms from $T^1_n$ to a node of degree $d$ that map the root to this node (for every child of the root, there are $d$ options). Since $d_1^n = o(d_2^n)$ for $1\le d_1 < d_2$, the induction base follows.

We now perform the induction step. Assume that the statement is proved for $k$. First, let us for every $n$ define a tree $H_n$ of depth $k + 1$  where the root has one child whose subtree is $T_n^k$. Let $L$ be a $(k + 1)$-level WL-label, i.e., a multiset of some $k$-level WL-labels $\ell$. We have:
\[h(H_n, L) =  \sum\limits_{\ell\in L} h(T_n^k, \ell).\]
Indeed, if $v$ is a node with label $L$, then we have to map the unique child of the root of $H_n$ to some neighbor of $v$. Upon choosing this neighbor, we have to map $T_n^k$ to this neighbor, and the number of ways to do that is $h(T_n^k, \ell)$, where $\ell$ is the $k$-level WL-label of this neighbor. And by definition, the multiset of $k$-level WL-labels of neighbors of $v$ is $L$.

 Now, there exists $m$ such that $h(H_m, L_1) < h(H_m, L_2)$ for all $L_1 < L_2 \in S$. Indeed, based on the condition \eqref{eq_order} for level $k$, we have for any two fixed $L_1, L_2 \in S$ with $L_1 < L_2$ 
 \[h(H_n, L_1) < h(H_n, L_2)\]
 for all large enough $n$ (we consider the largest $k$-level label $\ell$ that appears a different number of times in $L_1$ and $L_2$, there are fewer of them in $L_1$, so the difference  $h(H_n, L_2) - h(H_n, L_1)$ includes this term with a positive coefficient, this term is itself positive because of the assumption about not having isolated nodes, and all other terms are dominated by it). There are finitely many labels in $S$, so it remains to take $m$ large enough so that this inequality holds for all pairs in question.

 What remains to do is to define $T_n^{(k+1)}$ as a depth-$(k+1)$ tree where the root has $n$ children, each followed by $T_m^k$ (or, in other words, $n$ copies of $H_m$, glued by the root). Observe that 
 \[h(T_n^{k+1}, L) = h(H_m, L)^n.\]
Taking any two $L_1, L_2 \in S$ with $L_1 < L_2$, since  $h(H_m, L_1) < h(H_m, L_2)$, we obtain:
\[h(T_n^{k+1}, L_1) = h(H_m, L_1)^n = o( h(H_m, L_2)^n) = o(h(T_n^{k+1}, L_2)),\]
as required.
    \end{proof}
\end{proof}

\end{document}